\newcommand\cyr{%
\renewcommand\rmdefault{wncyr}%
\renewcommand\sfdefault{wncyss}%
\renewcommand\encodingdefault{OT2}%
\normalfont
\selectfont}
\DeclareTextFontCommand{\textcyr}{\cyr}
\DeclareFontFamily{OT1}{rsfs}{}
\DeclareFontShape{OT1}{rsfs}{n}{it}{<-> rsfs10}{}
\DeclareMathAlphabet{\mathscr}{OT1}{rsfs}{n}{it}
\numberwithin{equation}{section}
\newtheorem{theorem}{Theorem}[section]
\newtheorem{lemma}[theorem]{Lemma}
\newtheorem{corollary}[theorem]{Corollary}
\newtheorem*{maintheorem}{Main Theorem}
\theoremstyle{definition}
\newtheorem{definition}[theorem]{Definition}
\newtheorem{remark}[theorem]{Remark}
\theoremstyle{remark}
\newtheorem{example}[theorem]{Example}
\newcommand{\Ass}{\operatorname{Ass}}
\newcommand{\Spec}{\operatorname{Spec}}
\newcommand{\Ht}{\operatorname{ht}}
\newcommand{\depth}{\operatorname{depth}}
\newcommand{\Sing}{\operatorname{Sing}}
\newcommand{\fp}{\frak{p}}
\newcommand{\fa}{\frak{a}}
\begin{document}
\title[On the associated primes of local cohomology]{On the associated primes of local cohomology }

\author{Hailong Dao}
\address{Department of Mathematics\\
University of Kansas\\
 Lawrence, KS 66045-7523 USA}
\email{hdao@ku.edu}

\author[Pham Hung Quy]{Pham Hung Quy}
\address{Department of Mathematics, FPT University, Hoa Lac Hi-Tech Park, Ha Noi, Viet Nam}
\email{quyph@fpt.edu.vn}

\thanks{2010 {\em Mathematics Subject Classification\/}:13D45, 13A35, 13E99.\\
P.H. Quy is partially supported by a fund of Vietnam National Foundation for Science
and Technology Development (NAFOSTED) under grant number
101.04-2014.25. This paper was done while the second author was visiting Vietnam Institute for Advanced Study in Mathematics. He would like to thank the VIASM for hospitality and financial support.}

\keywords{Local cohomology, associated prime ideal, positive characteristic, finite $F$-representation type, filter regular sequence.}

%\subjclass{13}
%\subjclass[2000]{Primary 13-XX}
%\subjclass[2000]{Primary ; Secondary}
%\date{\today \, (\printtime)}
%\date{\today}

\begin{abstract} Let $R$ be a commutative Noetherian ring of prime characteristic $p$. In this paper we give a short proof using filter regular sequences  that the set of associated prime ideals of $H^t_I(R)$ is finite for any ideal $I$ and for any $t \ge 0$ when $R$ has finite $F$-representation type or finite singular locus. This  extends a previous result by Takagi-Takahashi and  gives affirmative answers for a problem of Huneke in many new classes of rings  in positive characteristic. We also give a criterion about the singularities of $R$ (in any characteristic) to guarantee that the set  $\Ass H^2_I(R)$ is always finite.  
\end{abstract}

\maketitle

%\tableofcontents

\section{Introduction}
Throughout this paper, let $R$ be a commutative Noetherian ring and $I$ an ideal of $R$. Local cohomology,  introduced by Grothendieck, is an important tool in both algebraic geometry and commutative algebra (cf. \cite{BS98}). In general, the local cohomology module $H^t_I(R)$ is not finitely generated as an $R$-module. Therefore an important problem is identifying finiteness properties of local cohomology. Huneke in \cite{Hu92} raised the question of whether local cohomology modules of Noetherian rings always have finitely many associated primes.  This problem is an active research area in commutative algebra in the last two decades. 

Nowadays, we known that Huneke's question has a negative answer in general; many interesting counterexamples were constructed by
Katzman \cite{K02}, Singh \cite{Si00}, and Singh and Swanson \cite{SS04}, even for a hypersurface $R$ with ``good" singularities. On the other hand,  affirmative results have been obtained for nice classes of $R$, typically with small singular loci.  In particular Huneke's question has been largely settled when $R$ is regular (cf. \cite{BBL14} \cite{HS93}, \cite{L93} and \cite{L97}), although important cases remain open, see Remark \ref{localize}. Many partial affirmative results for singular $R$ have also been proved, for example see \cite{BF00}, \cite{M01}, \cite{TT08} and \cite{Q13}. 

In this paper we first consider  Huneke's  problem  when $R$ contains a field of prime characteristic $p$. In this situation perhaps the most significant result to date in the singular situation came from the work of Takagi and Takahashi (\cite{TT08}), who  showed that Huneke's question has an affirmative answer when $R$ is a Gorenstein ring with finite $F$-representation type.  The notion of finite $F$-representation type was introduced by Smith and Van den Bergh in \cite{SV97} as a characteristic $p$ analogue of the notion of finite representation type. One part of our main theorem extends their result by dropping the Gorenstein condition. 

\begin{maintheorem} Let $R$ be a Noetherian ring of prime characteristic $p$ that has finite $F$-representation type or finite singular locus. Then $H^t_I(R)$ has only finitely many associated prime ideals for any ideal $I$ and for any $t \ge 0$. Consequently, $H^t_I(R)$ has closed support for any ideal $I$ and for any $t \ge 0$.
\end{maintheorem}

\begin{remark}
We have been informed that Mel Hochster and Luis N\'u\~nez-Betancourt have obtained the same result above with different method in \cite{HN15}. 
\end{remark}

Our main result yields affirmative answers of Huneke's problem for many important classes of rings. The proof is rather short, the main ingredient being the theory of filter regular sequence and an extremely useful isomorphism of local cohomology modules by Nagel-Schenzel, see Lemma \ref{nagel-schenzel}. 

In the last Section, we use the same approach to study the problem of when $\Ass H^2_I(R)$  is always finite and give a sufficient condition on the singularities of $R$ in any characteristic, see Theorem \ref{H2}.

\section{Results in positive characteristic}
In this Section we always assume that $R$ contains a field of prime characteristic $p>0$. For such a ring, we have the Frobenius homomorphism $F: R \to R; x \mapsto x^p$. We also denote $F^e = F \circ \cdots \circ F$ (iterated $e$ times) for all $e \ge 0$. For any $R$-module $M$, we
denote by ${}^eM$ the module $M$ with its $R$-module structure pulled back via $F^e$. That is, ${}^eM$ is just $M$ as an abelian group, but its
$R$-module structure is determined by $r \cdot m = r^{p^e}m$ for all $r \in R$ and $m \in M$. We say $R$ is $F$-finite if ${}^1R$ is a finitely generated $R$-module.  Let $I$ be an ideal of $R$. For each $q = p^e$ we denote $I^{[q]} = (a^q \,|\, a\in I)$. It is easy to see that
$$R/I \otimes_R {}^eM \cong {}^eM/(I \cdot {}^eM)  \cong {}^e(M/I^{[q]}M).$$

The following is useful in the sequel (cf. \cite[Lemma 3.1]{TT08}).

\begin{lemma} \label{L2.1}
Suppose that $R$ is a ring of prime characteristic. Let $M$ be an $R$-module. Then $\mathrm{Ass}_RM = \mathrm{Ass}_R {}^eM$ for all $e \ge 1$.

\end{lemma}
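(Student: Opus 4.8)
The plan is to show that the two modules have the same associated primes by proving that each side has the other's associated primes as a subset. Since $\mathrm{Ass}$ of a module is determined locally and behaves well under the Frobenius push-forward via the structure of $F^e$, the key observation is that ${}^eM$ is, as an abelian group, literally $M$, and the change of ring structure is induced by the ring map $F^e\colon R\to R$. So for a prime $\mathfrak p$ of $R$, asking whether $\mathfrak p\in\mathrm{Ass}_R({}^eM)$ amounts to asking whether there is $m\in M$ with $\Ann_R(m\text{ in }{}^eM)=\mathfrak p$; but $r\cdot m$ in ${}^eM$ is $r^{p^e}m$ computed in $M$, so $r$ annihilates $m$ in ${}^eM$ if and only if $r^{p^e}$ annihilates $m$ in $M$, i.e. $r\in\sqrt{\Ann_R(m)}$. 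Hence $\Ann_R(m\text{ in }{}^eM)=\sqrt{\Ann_R(m\text{ in }M)}$.

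First I would make this radical computation precise: for each $m\in M$, the annihilator of $m$ regarded as an element of ${}^eM$ is exactly the radical of its annihilator in $M$. Next I would recall the standard fact that a prime $\mathfrak p$ is associated to a module $N$ if and only if $\mathfrak p=\Ann_R(n)$ for some $n$, equivalently $\mathfrak p$ is the radical of $\Ann_R(n)$ for some $n$ \emph{and} is minimal over it — more cleanly, $\mathfrak p\in\mathrm{Ass}_R N$ iff $\mathfrak p$ is a minimal prime of $\Ann_R(n)$ for some $n\in N$ with the property that $\Ann_R(n)$ is $\mathfrak p$-primary after localizing, but the cleanest route is: $\mathfrak p\in\mathrm{Ass}_R N$ iff $\depth_{R_\mathfrak p}N_\mathfrak p=0$, iff there is $n\in N_\mathfrak p$ killed exactly by $\mathfrak p R_\mathfrak p$. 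Then, using that localization and the Frobenius push-forward commute appropriately (${}^eM$ localized at $\mathfrak p$ corresponds to ${}^e(M_\mathfrak p)$ over $R_\mathfrak p$, since $F^e$ is compatible with the localization map), I reduce to the local case and show $\depth_{R_\mathfrak p}M_\mathfrak p=0 \iff \depth_{R_\mathfrak p}({}^eM_\mathfrak p)=0$. This follows because an element of $M_\mathfrak p$ killed by exactly $\mathfrak p R_\mathfrak p$ in the $M$-structure is killed by exactly $\mathfrak p R_\mathfrak p$ in the ${}^eM$-structure too, the ideal $\mathfrak p$ being radical so that $\sqrt{\mathfrak p R_\mathfrak p}=\mathfrak p R_\mathfrak p$.

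Alternatively, and perhaps more simply, I would argue directly with $\mathrm{Supp}$ and primary decomposition avoiding depth: $\mathrm{Supp}_R({}^eM)=\mathrm{Supp}_R(M)$ because $M_\mathfrak p=0\iff{}^eM$ localizes to $0$ at $\mathfrak p$ (the push-forward of a zero module is zero, and conversely), and $\mathrm{Ass}$ consists of those primes in $\mathrm{Supp}$ that are "visibly attached" to an element; combining the radical-of-annihilator identity above with the fact that $V(\Ann_R m)=V(\sqrt{\Ann_R m})$ shows the minimal primes of individual cyclic submodules match up on both sides, and associated primes are precisely the primes that are minimal in $\mathrm{Supp}$ of \emph{some} finitely generated submodule (here one may even take cyclic submodules generated by a single element when $R$ is Noetherian). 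I would close the argument by checking both inclusions $\mathrm{Ass}_R M\subseteq\mathrm{Ass}_R {}^eM$ and $\mathrm{Ass}_R {}^eM\subseteq\mathrm{Ass}_R M$ symmetrically, which is immediate once the bijection of the relevant cyclic submodules' supports is in hand.

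The main obstacle — really the only subtle point — is handling the case where $M$ need not be finitely generated (the lemma is stated for an arbitrary $R$-module), so one cannot blithely say "$\mathfrak p\in\mathrm{Ass}$ iff $\mathfrak p$ is minimal in $\mathrm{Supp}$"; the safe route is to work element-by-element with the annihilator/radical identity $\Ann_R(m\text{ in }{}^eM)=\sqrt{\Ann_R(m\text{ in }M)}$ together with the characterization $\mathfrak p\in\mathrm{Ass}_R N\iff \mathfrak p R_\mathfrak p\in\mathrm{Ass}_{R_\mathfrak p}N_\mathfrak p\iff$ some element of $N_\mathfrak p$ has annihilator exactly $\mathfrak p R_\mathfrak p$, and to verify carefully that localization of $R$ at $\mathfrak p$ is compatible with the Frobenius twist so that one really does reduce to the local situation. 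Everything else is a short verification.
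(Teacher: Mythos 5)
The paper itself gives no proof of this lemma (it is quoted from Takagi--Takahashi), so your argument stands on its own; unfortunately its central identity is wrong. For $m\in M$ one only has $\Ann_R(m\ \text{in}\ M)\subseteq\Ann_R(m\ \text{in}\ {}^eM)\subseteq\sqrt{\Ann_R(m\ \text{in}\ M)}$; the asserted equality $\Ann_R(m\ \text{in}\ {}^eM)=\sqrt{\Ann_R(m\ \text{in}\ M)}$ rests on the false implication ``$r\in\sqrt{\Ann_R(m)}\Rightarrow r^{p^e}m=0$''. Concretely, take $R=k[x]$ with $\Char k=p$, $M=R/(x^{p+1})$, $m=\bar 1$, $e=1$: then $\sqrt{\Ann_R(m)}=(x)$, while the annihilator of $m$ in ${}^1M$ is $(x^2)$. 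This error does not affect the inclusion $\Ass_RM\subseteq\Ass_R\,{}^eM$: when $\Ann_R(m)=\fp$ is prime, the sandwich above forces the twisted annihilator to equal $\fp$ too (which is why your localized statement ``killed by exactly $\fp R_{\fp}$ in the $M$-structure, hence in the ${}^eM$-structure'' is fine). But your first route gives nothing in the reverse direction as written: an element of $({}^eM)_{\fp}$ whose twisted annihilator is exactly $\fp R_{\fp}$ may have ordinary annihilator a strictly smaller, non-radical ideal, so it is not by itself a witness that $\fp R_{\fp}\in\Ass_{R_{\fp}}M_{\fp}$.

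The gap is repairable, and the repair is precisely the ingredient your alternative route names: since $R$ is Noetherian (the paper's standing hypothesis), for an arbitrary module $N$ one has $\fp\in\Ass_RN$ if and only if $\fp$ is a minimal prime of $\Ann_R(n)$ for some $n\in N$, because minimal primes of the support of the cyclic submodule generated by $n$ are associated to it. Combine this with the true part of your computation --- for each $m$ the two annihilators have the same radical, hence the same minimal primes --- and both inclusions follow at once, for arbitrary (not necessarily finitely generated) $M$, with no localization or depth discussion needed (the depth characterization of $\Ass$ is in any case delicate for non-finitely-generated modules; your annihilator criterion after localizing is the correct one). So: drop the false equality, state only the radical equality, and run the second argument. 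Equivalently, one can quote the standard fact that for a homomorphism of Noetherian rings the associated primes of a module restricted along it are the contractions of its associated primes, and observe that $(F^e)^{-1}(\fq)=\fq$ for every prime $\fq$.
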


Rings with finite $F$-representation type were first introduced by Smith and Van
den Bergh in \cite{SV97}, under the assumption that the Krull-Schmidt theorem holds for
them. Yao \cite{Y05} studied these rings in a more general setting.

\begin{definition}
  Let $R$ be a Noetherian ring of prime characteristic $p$. We say that $R$ has
{\it finite $F$-representation type} by finitely generated $R$-modules $M_1, \ldots, M_s$ if for every $e \ge 0$, the $R$-module ${^e}R$ is isomorphic to a finite direct sum of the $R$-modules $M_1, \ldots, M_s$, that is, there exists non-negative integers $n_{e1}, \ldots, n_{es}$ such that
$${}^eR \cong  \bigoplus_{i =1}^s M_i^{n_{ei}}.$$
We simply say that $R$ has finite $F$-representation type if there exist finitely
generated $R$-modules $M_1, \ldots, M_s$ by which $R$ has finite $F$-representation type.
\end{definition}

It is clear that if the ring $R$ has finite $F$-representation type, then $R$ is $F$-finite. We collect here some examples of rings with finite $F$-representation type. For the details see \cite[Example 1.3]{TT08}.

\begin{example} \label{exam}
\begin{enumerate}[{(i)}]
\item Let $R$ be an $F$-finite regular local ring of characteristic $p>0$ (resp. a polynomial ring $k[X_1, \ldots, X_n]$ over a field $k$ of characteristic $p>0$ such that $[k : k^p]< \infty$). Then $R$ has finite $F$-representation type  by the $R$-module $R$.

\item Let $R$ be a Cohen-Macaulay local ring of prime characteristic $p$ with finite representation type. Then $R$ has finite $F$-representation type.

\item Let $S = k[X_1, \ldots, X_n]$ be a polynomial ring over a field $k$ of characteristic $p>0$ such that $[k : k^p]< \infty$ and $\fa$ a monomial ideal of $S$. Then the quotient ring $R = S/\fa$ has finite $F$-representation type.

\item Let $R \hookrightarrow S$ be a finite local
homomorphism of Noetherian local rings of prime characteristic $p$ such that
$R$ is an $R$-module direct summand of $S$. If $S$ has finite $F$-representation type, so does $R$.

\item A normal semigroup ring over a field $k$ of characteristic $p>0$ such that $[k : k^p]< \infty$ has finite $F$-representation type. Also, rings of invariants of linearly reductive groups over such field have finite $F$-representation type.

\end{enumerate}

\end{example}

\begin{remark} It is easy to see that if $R$ has finite $F$-representation type, then so do $R[X]$ and $R[[X]]$, as well as localizations and completions of $R$. Combining this fact and the above examples we can produce many interesting examples of rings with finite $F$-representation type.
\end{remark}

We next recall the notion of $I$-filter regular sequence of
$R$ and its relation with local cohomology.
\begin{definition} Let $I$ be an ideal of a Noetherian ring $R$. We say a sequence $x_1,\ldots,x_t$ of elements contained in $I$ is an
{\it $I$-filter regular sequence of $R$} if
$$\mathrm{Supp}\,
((x_1,\ldots,x_{i-1}):x_i)/(x_1,\ldots,x_{i-1}) \subseteq V(I)$$
for all $i = 1,\ldots,t$, where $V(I)$ denotes the set of prime
ideals containing $I$. This condition is equivalent to  $x_i \notin \fp$ for all $\fp \in \mathrm{Ass}_R R/(x_1, \ldots, x_{i-1}) \setminus V(I)$ and for all $i = 1, \ldots, t$.
\end{definition}

The following is very useful to analyze the local cohomology modules by filter regular sequences (cf. \cite[Proposition 3.4]{NS95} for the original version when $I$ is the maximal ideal, and \cite[Proposition 2.3]{AS03} for the general version below). More applications can be found in \cite{QS16}.

\begin{lemma}[Nagel-Schenzel's isomorphism] \label{nagel-schenzel}Let $I$ be an ideal of a Noetherian ring $R$ and $x_1,\ldots,x_t$ an
$I$-filter regular sequence of $R$. Then we have
\[
H^i_{I}(R) \cong
\begin{cases}
H^i_{(x_1,\ldots, x_t)}(R) &\text{ if } i<t\\
H^{i-t}_{I}(H^t_{(x_1, \ldots, x_t)}(R)) &\text{ if } i \ge t.
\end{cases}
\]

\end{lemma}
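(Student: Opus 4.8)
The plan is to obtain both cases at once from the Grothendieck spectral sequence of a composite functor, the filter‑regular hypothesis entering only through two vanishing statements. Set $\fa=(x_1,\ldots,x_t)$. Since each $x_i\in I$ we have $\fa\subseteq I$, so every $I$-torsion element is $\fa$-torsion and hence $\Gamma_I=\Gamma_I\circ\Gamma_{\fa}$ as functors on $R$-modules; as $R$ is Noetherian, $\Gamma_{\fa}$ sends injective modules to injective (in particular $\Gamma_I$-acyclic) modules. The Grothendieck spectral sequence of this composition therefore reads
\[
E_2^{p,q}=H^p_I\!\left(H^q_{\fa}(R)\right)\ \Longrightarrow\ H^{p+q}_I(R).
\]

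The real content is the pair of facts: (a) $H^q_{\fa}(R)=0$ for $q>t$; (b) $H^q_{\fa}(R)$ is $I$-torsion for $q<t$. Fact (a) is immediate because $\fa$ is generated by $t$ elements. For (b) I would localize at a prime $\fp\notin V(I)$: every associated prime of $R_{\fp}/(x_1,\ldots,x_{i-1})R_{\fp}$ is the extension of an associated prime of $R/(x_1,\ldots,x_{i-1})$ contained in $\fp$, hence lying outside $V(I)$, hence (by the very definition of an $I$-filter regular sequence) not containing $x_i$; so each $x_i$ is a nonzerodivisor on $R_{\fp}/(x_1,\ldots,x_{i-1})R_{\fp}$, which forces $H^q_{\fa R_{\fp}}(R_{\fp})=0$ for all $q<t$. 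Thus $\Supp H^q_{\fa}(R)\subseteq V(I)$ for $q<t$, and a module whose support lies in $V(I)$ is $I$-torsion (each cyclic submodule is finitely generated, hence killed by a power of $I$). Consequently $E_2^{p,q}=0$ whenever $q>t$, or whenever $q<t$ and $p\ge1$, while $E_2^{0,q}=H^q_{\fa}(R)$ for $q<t$.

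Granting (a) and (b), the $E_2$-page is supported on the single row $q=t$ together with the single entry $(0,q)$ of each row $q<t$. For $r\ge2$ every differential $d_r$ into or out of the row $q=t$, and into or out of any entry $E_r^{0,q}$ with $q<t$, has its other end in the vanishing zone $\{q<t,\ p\ge1\}$ or in a negative or empty bidegree; hence the spectral sequence degenerates at $E_2$, giving $E_\infty^{p,t}=H^p_I(H^t_{\fa}(R))$ and $E_\infty^{0,q}=H^q_{\fa}(R)$ for $q<t$. Reading off the limit filtration of $H^i_I(R)$: if $i<t$ the only nonzero subquotient is $E_\infty^{0,i}=H^i_{\fa}(R)$, and if $i\ge t$ the only nonzero subquotient is $E_\infty^{\,i-t,\,t}=H^{i-t}_I(H^t_{\fa}(R))$. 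These are precisely the two cases of the lemma.

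I do not expect a serious obstacle: essentially everything is formal once (b) is in hand, and (b) is just the observation that a filter‑regular sequence becomes a (possibly improper) regular sequence after localizing away from $V(I)$. The one point to handle with a little care is exactly that improperness --- after localization one may have $(x_1,\ldots,x_k)R_{\fp}=R_{\fp}$ already for some $k\le t$, but this only makes more local cohomology vanish, so $H^q_{\fa R_{\fp}}(R_{\fp})=0$ for $q<t$ still holds. A spectral‑sequence‑free alternative is an induction on $t$: split off $N:=\Gamma_{(x_1)}(R)$, which is $I$-torsion because $x_1$ is $I$-filter regular, apply $\Gamma_I$ to $0\to R/N\to (R/N)_{x_1}\to H^1_{(x_1)}(R)\to 0$ using that $H^j_I\big((R/N)_{x_1}\big)=0$ since $x_1\in I$ acts invertibly, and then invoke the inductive hypothesis for $x_2,\ldots,x_t$; this reproduces the same isomorphisms at the cost of more exact‑sequence chasing with associated primes but no new ideas.
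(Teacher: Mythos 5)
Your proof is correct. Note that the paper does not prove this lemma at all: it is quoted from Nagel--Schenzel \cite{NS95} and Asadollahi--Schenzel \cite{AS03}, so there is no in-paper argument to compare against; your write-up is in fact a self-contained proof along the same lines as the cited sources. The two pillars check out: since $\fa=(x_1,\ldots,x_t)\subseteq I$ we have $\Gamma_I=\Gamma_I\circ\Gamma_{\fa}$ and $\Gamma_{\fa}$ preserves injectives over a Noetherian ring, so the spectral sequence $E_2^{p,q}=H^p_I\big(H^q_{\fa}(R)\big)\Rightarrow H^{p+q}_I(R)$ is available; and your verification of the crucial point (b) is sound --- localizing at $\fp\notin V(I)$, every associated prime of $R_{\fp}/(x_1,\ldots,x_{i-1})R_{\fp}$ comes from an associated prime of $R/(x_1,\ldots,x_{i-1})$ contained in $\fp$, hence outside $V(I)$, hence avoiding $x_i$, so the sequence becomes a (possibly improper) regular sequence and $H^q_{\fa}(R)_{\fp}=0$ for $q<t$; thus $\Supp H^q_{\fa}(R)\subseteq V(I)$, these modules are $I$-torsion, and $E_2^{p,q}=0$ for $q<t$, $p\ge 1$ because higher $I$-cohomology of an $I$-torsion module vanishes. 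The degeneration at $E_2$ and the identification of the single surviving subquotient in each total degree (including the caveat that $(x_1,\ldots,x_k)R_{\fp}=R_{\fp}$ only helps) are handled correctly, so both cases of the lemma follow; the inductive alternative you sketch at the end would also work but is not needed.
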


\begin{remark}\label{filter}
It should be noted that the notion of filter regular sequence and the Nagel-Schenzel isomorphism are not dependent on the characteristic of the ring. Moreover, for any $t \ge 1$ we always can choose a $I$-filter regular sequence of $x_1, \ldots, x_t$. Indeed, by the prime avoidance lemma we can choose $x_1 \in I$ and $x_1 \notin \fp$ for all $\fp \in \mathrm{Ass}_RR \setminus V(I)$. For $i>1$ assume that we have $x_1, \ldots, x_{i-1}$, then we choose  $x_i \in I$ and $x_i \notin \fp$ for all $\fp \in \mathrm{Ass}_RR/(x_1, \ldots, x_{i-1}) \setminus V(I)$ by the prime avoidance lemma again. For more details, see \cite[Section 2]{AS03}. 
\end{remark}

\begin{lemma}\label{sing}
Let $R$ be a commutative Noetherian rings of characteristic $p>0$ and $\fa$ is an ideal. Then $$\cup_{e\geq 0} \Ass  R/\fa^{[p^e]} \subseteq \Ass R/\fa \cup\Sing(R).$$

\end{lemma}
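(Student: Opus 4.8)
The plan is to reduce the statement to a question about a single prime $\fp$ in the left-hand union, and then localize at $\fp$ to detect whether it lies in the singular locus. First I would fix $\fp \in \Ass R/\fa^{[p^e]}$ for some $e \ge 0$, and I may as well assume $\fp \notin \Ass R/\fa$; the goal is then to show $\fp \in \Sing(R)$, i.e. $R_\fp$ is not regular. Localizing at $\fp$ commutes with Frobenius powers, $(R/\fa^{[p^e]})_\fp \cong R_\fp/(\fa R_\fp)^{[p^e]}$, and associated primes behave well under localization, so we are reduced to the local case: $(R_\fp, \fp R_\fp)$ is a Noetherian local ring, $\fp R_\fp \in \Ass R_\fp/(\fa R_\fp)^{[p^e]}$, $\fp R_\fp \notin \Ass R_\fp/\fa R_\fp$, and we want $R_\fp$ non-regular. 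Equivalently, after this reduction we may assume $(R,\fm)$ is local, $\fm \in \Ass R/\fa^{[p^e]} \setminus \Ass R/\fa$, and must derive that $R$ is not regular.

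The key input is Kunz's theorem: a Noetherian local ring of characteristic $p$ is regular if and only if the Frobenius endomorphism $F\colon R \to R$ is flat. Suppose for contradiction that $R$ is regular. Then $F^e\colon R \to R$ is flat, which means ${}^eR$ is a flat — hence, since $R$ is local Noetherian and ${}^eR$ may not be finitely generated in general, one should be slightly careful, but faithful flatness of $F^e$ still holds. The cleanest route is: flatness of $F^e$ implies that tensoring the short exact sequence $0 \to \fa \to R \to R/\fa \to 0$ with the flat ring map $F^e$ preserves exactness, and more to the point, for a flat ring homomorphism $\varphi\colon R \to S$ one has $\Ass_S(S/\fa S) = \bigcup_{\fq \in \Ass_R R/\fa} \Ass_S(S/\fq S)$. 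Applying this with $\varphi = F^e$, and using that $\Spec$ of $F^e$ is a homeomorphism (indeed the identity on the underlying space, since $F^e$ is integral and bijective on spectra for $R$ reduced — or more simply using $\Ass_R({}^eM) = \Ass_R M$ from Lemma \ref{L2.1} after translating along this homeomorphism), we get that $\fm \in \Ass R/\fa^{[p^e]}$ forces $\fm \in \Ass R/\fa$, contradicting our assumption. Hence $R$ is not regular, so $\fm \in \Sing(R)$, and unwinding the localization gives $\fp \in \Sing(R)$ as desired.

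I expect the main obstacle to be the bookkeeping around the non-finitely-generated module ${}^eR$ and making the flat base change statement for associated primes precise in exactly the form needed: one wants $\Ass_R R/\fa^{[p^e]} \subseteq \Ass_R R/\fa$ at the localized prime under the hypothesis of regularity, and the cleanest formulation is probably to invoke that when $F^e$ is flat the map $R \to {}^eR$ (equivalently $R \xrightarrow{F^e} R$) satisfies $\Ass_R(M \otimes_R {}^eR) = \Ass_R M$ for finitely generated $M$, combined with $\Ass_R {}^eN = \Ass_R N$. Putting $M = R/\fa$ and identifying $M \otimes_R {}^eR \cong {}^e(R/\fa^{[p^e]})$ via the displayed isomorphism recalled just before Lemma \ref{L2.1} closes the argument. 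An alternative that sidesteps ${}^eR$ entirely: use Kunz directly in the contrapositive — if $R_\fp$ is regular then $R_\fp/(\fa R_\fp)^{[p^e]}$ is obtained from $R_\fp/\fa R_\fp$ by a flat base change along $F^e$, hence has the same associated primes (pulled back along the homeomorphism $\Spec F^e$), contradiction. Either way the depth of the result is entirely in Kunz's theorem; the rest is localization and a standard flat-base-change fact about $\Ass$.
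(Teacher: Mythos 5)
Your overall strategy is the paper's: localize at a prime $\fp$ in $\Ass R/\fa^{[p^e]}\setminus \Ass R/\fa$, assume $R_\fp$ is regular, and use Kunz's theorem that Frobenius is then flat to reach a contradiction. But the final step, where you pass from the flat base change formula to the contradiction, has a real gap. Writing $S$ for the target copy of $R$ along $F^e$, the formula you quote gives
$$\Ass_S(S/\fa S)=\bigcup_{\fq \in \Ass_R (R/\fa)} \Ass_S(S/\fq S),$$
so from $\fm \in \Ass_S(S/\fa S)=\Ass_R(R/\fa^{[p^e]})$ you only get $\fm \in \Ass_S(S/\fq S)=\Ass_R\bigl(R/\fq^{[p^e]}\bigr)$ for \emph{some} $\fq \in \Ass_R(R/\fa)$. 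To conclude $\fm \in \Ass_R(R/\fa)$ you must rule out $\fq \subsetneq \fm$, i.e.\ you must know that $\fq^{[p^e]}$ has no embedded primes when Frobenius is flat. Neither of the justifications you offer supplies this: the fact that $\Spec F^e$ is the identity on points says nothing about embedded primes of $S/\fq S$, and Lemma \ref{L2.1} only converts $\Ass_R(R/\fq^{[p^e]})$ into $\Ass_R(R/\fq \otimes_R {}^e R)$, which is the same unknown set. Note that ``Frobenius powers of a prime have no embedded primes over a regular ring'' is exactly the statement of the lemma in the special case $\fa=\fq$ prime, so as written the argument is circular precisely at the point where the content lies.

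The gap is easy to close, and there are two standard ways. One is the complement to the flat base change theorem: since $S$ is flat over $R$, the module $S/\fq S$ is flat, hence torsion-free, over the domain $R/\fq$, so every element of $R\setminus \fq$ acts as a nonzerodivisor on it; therefore every associated prime of $S/\fq S$ contracts to $\fq$, and since contraction along $F^e$ is the identity on primes this forces $\Ass_S(S/\fq S)=\{\fq\}$, completing your argument. The other is the paper's own route, which avoids the $\Ass$-base-change theorem altogether: since $\fp \notin \Ass R/\fa$ there is $x \in \fp R_\fp$ that is a nonzerodivisor on $(R/\fa)_\fp$, and tensoring the injection $x\colon (R/\fa)_\fp \to (R/\fa)_\fp$ along the flat map $F^e$ shows $x^{p^e}$ is a nonzerodivisor on $(R/\fa^{[p^e]})_\fp$, i.e.\ $\depth (R/\fa^{[p^e]})_{\fp}=\depth (R/\fa)_{\fp}>0$, contradicting $\fp \in \Ass R/\fa^{[p^e]}$. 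Either patch makes your proof correct, and with it your argument is a minor variant of the paper's depth computation rather than a genuinely different proof.
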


\begin{proof}
Let $\fp \in \Ass R/\fa^{[q]}$ for some $q=p^e$. If $\fp \in \Ass R/\fa$, we are done. If not, we need to show that $\fp \in \Sing(R)$. Suppose not, then $R_{\fp}$ is regular. As Frobenius localizes and is flat over a regular ring, we have that $\depth (R/\fa^{[q]})_{\fp} = \depth (R/\fa)_{\fp}>0$, a contradiction. 
\end{proof}

We are now ready to prove the Main Theorem which we mentioned in the introduction.

\begin{proof}[Proof of Main Theorem] For any ideal $I$ and any $t \ge 0$ we prove that $\mathrm{Ass}_RH^i_I(R)$ is finite provided $R$ has finite $F$-representation type. The case $t=0$ is trivial. For $t\ge 1$, by Remark \ref{filter}  we can choose an $I$-filter regular sequence $x_1, \ldots, x_t$ of $R$. Set $\fa = (x_1, \ldots, x_t)$, by Lemma \ref{nagel-schenzel} we have
$$H^t_I(R) \cong H^0_I(H^t_{\fa}(R)).$$

Therefore $\mathrm{Ass}_RH^t_I(R) \subseteq \mathrm{Ass}_R H^t_{\fa}(R)$. Consider local cohomology as the direct limit of Koszul cohomologies we have
$$H^t_{\fa}(R) \cong \lim_{\overset{\longrightarrow}{n \in \mathbb{N}}} R/(x_1^n, \ldots, x_t^n).$$
In the case our ring is of prime characteristic $p>0$ we have the following isomorphism
$$H^t_{\fa}(R) \cong \lim_{\overset{\longrightarrow}{q}} R/(x_1^q, \ldots, x_t^q),$$
where $q = p^e$, $e \ge 0$. It is not hard to check that
$$\mathrm{Ass}_R H^t_{\fa}(R) \subseteq \bigcup_{q} \mathrm{Ass}_R R/(x_1^q, \ldots, x_t^q). \  \ \  (*)$$
Hence it is enough to prove that $\cup_{q} \mathrm{Ass}_R R/\fa^{[q]}$ is a finite set. On the other hand, let $M_1, \ldots, M_s$ be the finitely generated $R$-modules by which $R$ has finite $F$-representation type. For all $e \ge 0$ we have
$${}^e(R/\fa^{[q]}) \cong R/\fa \otimes_R {}^eR \cong R/\fa \otimes_R \big(\bigoplus_{i =1}^s M_i^{n_{ei}} \big) \cong \bigoplus_{i =1}^s (M_i/\fa M_i)^{n_{ei}}.$$
By Lemma \ref{L2.1} we have
$$\mathrm{Ass}_R R/\fa^{[q]} = \mathrm{Ass}_R {^e}(R/\fa^{[q]}) \subseteq \bigcup_{i=1}^s \mathrm{Ass}_R M_i/\fa M_i$$
for all $q = p^e$. Since $M_i$ is finitely generated for all $1 \le i \le s$, the proof is complete.

When $\Sing(R)$ is finite, the proof is the same until $(*)$, then we can invoke Lemma \ref{sing}.

\end{proof}

\begin{remark} \label{localize} We can also prove the assertion of our Theorem when $\Sing(R)$ is finite  by the localization at countably infinitely many primes technique in \cite{BQ16}. To prove that assume $\mathrm{Ass}_RH^t_I(R)$ is an infinite set for some ideal $I$ and some $t \ge 0$. Since $\mathrm{Sing}(R)$ is finite, we can choose a countably infinitely many set $\{\fp_i\}_{i \in \mathbb{N}} \subseteq \mathrm{Ass}_RH^t_I(R)$ such that $R_{\fp_i}$ is regular for all $i \ge 1$. Using the localization at countably infinitely many primes technique (if necessary) as in \cite[Lemma 1.1]{BQ16} we obtain a regular ring $T$ of prime characteristic such that $\mathrm{Ass}_TH^t_{IT}(T)$ is infinite. This is a contradiction. This proof would work in all characteristic, but even in characteristic $0$ we do not know if finiteness of associated primes hold for all regular rings. 
\end{remark}

\begin{corollary}
Let $R\to S$ be a homomorphism of  commutative Noetherian rings of characteristic $p>0$ that splits as $R$-modules. Assume that  $\Sing(S)$ is a finite set. Then $H^t_I(R)$ has only finitely many associated prime ideals for any ideal $I$ and for any $t \ge 0$.
\end{corollary}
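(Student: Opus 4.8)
The plan is to reduce the statement to the Main Theorem applied to $S$. The key point is that $H^t_I(R)$ is an $R$-module direct summand of $H^t_{IS}(S)$. To see this, fix generators $a_1, \dots, a_n$ of $I$ and let $C^\bullet$ denote the \v{C}ech complex of $R$ with respect to $a_1, \dots, a_n$, so that $H^t_I(R) = H^t(C^\bullet)$. Since localization commutes with $-\otimes_R S$, the complex $C^\bullet \otimes_R S$ is exactly the \v{C}ech complex of $S$ with respect to the images of the $a_i$, hence computes $H^\bullet_{IS}(S)$; this is the only place where it matters that $R \to S$ is a ring homomorphism and not merely a map of modules. The splitting provides an $R$-module decomposition $S \cong R \oplus C$, and since both $-\otimes_R S$ and cohomology are additive functors, we obtain
$$H^t_{IS}(S) \cong H^t(C^\bullet) \oplus H^t(C^\bullet \otimes_R C) = H^t_I(R) \oplus H^t(C^\bullet \otimes_R C)$$
as $R$-modules. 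In particular $H^t_I(R)$ is an $R$-module direct summand of $H^t_{IS}(S)$.

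From this inclusion of direct summands, $\Ass_R H^t_I(R) \subseteq \Ass_R H^t_{IS}(S)$. Now $H^t_{IS}(S)$ is naturally an $S$-module, and for any $S$-module $N$ one has the standard fact (valid since $S$ is Noetherian) that $\Ass_R N = \{\fq \cap R : \fq \in \Ass_S N\}$; so $\Ass_R H^t_{IS}(S)$ is the image of $\Ass_S H^t_{IS}(S)$ under contraction along $R \to S$. Finally, $S$ is a Noetherian ring of characteristic $p$ whose singular locus is finite, so the Main Theorem applied to $S$ and the ideal $IS$ shows that $\Ass_S H^t_{IS}(S)$ is finite; hence its contraction to $R$ is finite, and therefore $\Ass_R H^t_I(R)$ is finite. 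The case $t=0$ is immediate, so the argument covers all $t \ge 0$.

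I do not expect a genuine obstacle here. The one point demanding a little care is the identification of the \v{C}ech complex computing $H^\bullet_{IS}(S)$ with the base change $C^\bullet \otimes_R S$, which is precisely what allows the $R$-module splitting of $R \to S$ to pass to a splitting of the corresponding local cohomology modules; the behavior of associated primes under direct summands and under restriction of scalars along a ring map $R \to S$ is elementary and standard. Everything else is a direct invocation of the Main Theorem for $S$.
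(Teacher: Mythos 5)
Your argument is correct and is essentially the paper's proof: the paper simply invokes the Main Theorem for $S$ together with the direct-summand transfer result of \cite{NB12}, and your \v{C}ech-complex splitting $H^t_{IS}(S)\cong H^t_I(R)\oplus H^t(C^\bullet\otimes_R C)$ is exactly the content of that citation, carried out by hand. The remaining steps (associated primes of a summand, contraction of $\Ass_S$ to $\Ass_R$ using that $S$ is Noetherian, and the Main Theorem applied to $S$ with its finite singular locus) are handled correctly.
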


\begin{proof}
One can use our main Theorem together with \cite{NB12}.
\end{proof}

\section{On $\Ass H^2_I(R)$}

In this section we consider rings of any characteristic.  Despite all the positive and negative results we have mentioned, it remains mysterious to understand the associated primes of $H^2_I(R)$, the first non-trivial case. It can be infinite even for a hypersurface domain (\cite[Remark 4.2]{SS04}). On the other hand, we do not know if this set is always finite if $R$ is an excellent  normal domain (however, it is finite if $\dim R\leq 4$, \cite{M01, HKM09}). In this section, we analyze this question a bit further and identify an interesting condition that guarantees the finiteness of $\Ass H^2_I(R)$. 

\begin{definition}
A local normal domain $R$ is said to satisfy condition $(D3)$ if for any reflexive ideal $I$, $\depth I\geq \min \{\dim R, 3\}$. 
\end{definition}

\begin{remark}\label{h2ex}
The following local rings satisfy condition $(D3)$: any normal domain of dimension at most $2$,  any UFD of depth at least $3$. In particular, any complete intersection regular in codimension $3$ is $(D3)$. More interestingly, $R$ is $(D3)$ if it is $\mathbb Q$-factorial and is strongly $F$-regular (positive characteristic)  or   has log-terminal singularities (characteristic $0$), see \cite[Theorem 3.1]{PS14}.
\end{remark}

\begin{theorem}\label{H2}
Suppose $R$ is a normal domain such that the set $$N = \{ \fp \in \Spec R | \  R_{\fp} \ \textit{is not}\  (D3)\}$$ is finite. Then $\Ass H^2_I(R)$ is finite for any ideal $I$.
\end{theorem}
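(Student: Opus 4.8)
The plan is to mimic the positive-characteristic argument: reduce the computation of $\Ass H^2_I(R)$ to associated primes attached to a short filter regular sequence, and then control the bad part using the hypothesis on $N$. First I would dispose of the easy cases: if $t = 0,1$ then $H^i_I(R)$ is $I$-torsion ($i=0$) or computed by a single filter regular element, so finiteness is classical; so assume we are really looking at $H^2_I(R)$ with $\Ht I \geq 2$. By Remark~\ref{filter} choose an $I$-filter regular sequence $x_1, x_2$ of $R$ and set $\fa = (x_1, x_2)$. By Lemma~\ref{nagel-schenzel} we get $H^2_I(R) \cong H^0_I(H^2_{\fa}(R))$, hence $\Ass_R H^2_I(R) \subseteq \Ass_R H^2_{\fa}(R)$, and it suffices to prove that $\Ass_R H^2_{\fa}(R)$ is finite.

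Now $H^2_{\fa}(R)$ fits into the right-exact sequence coming from the \v{C}ech complex on $x_1, x_2$: there is an exact sequence $R_{x_1} \oplus R_{x_2} \to R_{x_1 x_2} \to H^2_{\fa}(R) \to 0$. Thus $H^2_{\fa}(R) = R_{x_1 x_2}/(\im R_{x_1} + \im R_{x_2})$, and I would analyze this as a direct limit $H^2_{\fa}(R) \cong \varinjlim_n R/(x_1^n, x_2^n)$, so that $\Ass_R H^2_{\fa}(R) \subseteq \bigcup_n \Ass_R R/(x_1^n, x_2^n)$ as in $(*)$ of the Main Theorem's proof. So everything comes down to bounding $\bigcup_n \Ass_R R/(x_1^n, x_2^n)$. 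The new ingredient replacing finite $F$-representation type is the $(D3)$ condition: I expect to use it to show that for a prime $\fp \notin N \cup \Ass_R R/x_1$ with $\Ht \fp \geq 3$ (say, after going modulo $x_1$ to reduce to a filter regular situation), $\fp$ cannot be associated to $R/(x_1^n, x_2^n)$, because localizing at such $\fp$ the ring $R_\fp$ is $(D3)$ and $(x_1^n)$ is (up to radical / reflexive hull) a reflexive ideal there, forcing $\depth_{R_\fp} R_\fp/(x_1^n) \geq 2$ — in fact $x_2$ stays a nonzerodivisor on a suitable reflexive module — so that $\depth_{R_\fp} R_\fp/(x_1^n, x_2^n) \geq 1$. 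Combined with the fact that primes of height $\leq 2$ containing $\fa$ form a finite set (they lie in $\Min(\fa)$ or are among $\Ass R/x_1$ together with heights controlled since $R$ is a domain of finite dimension? — more carefully: the height-$2$ primes containing $\fa$ are the minimal primes of $\fa$, a finite set, and height-$\leq 1$ primes cannot contain the height-$\geq 2$ ideal $\fa$ in the relevant localizations), and that $N$ is finite, we conclude the union is finite.

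The main obstacle will be the middle step: making precise why the $(D3)$ hypothesis kills the "large" associated primes of $R/(x_1^n, x_2^n)$ uniformly in $n$. The subtlety is that $(x_1^n)$ need not be reflexive, and $R/(x_1^n, x_2^n)$ is not $R/(x_1) \otimes R/(x_2)$ of something reflexive, so one has to pass to the reflexive hull or work with the image of $R_{x_1}$ inside $R_{x_1 x_2}$ and track depth along the limit. I would handle this by first using Nagel–Schenzel a second time: after inverting/quotienting appropriately, $H^2_{\fa}(R) \cong H^1_{(x_2)}(H^1_{(x_1)}(R))$ when $x_1$ is a nonzerodivisor, and $H^1_{(x_1)}(R) = R_{x_1}/R$; then $\Ass H^2_{\fa}(R) \subseteq \Ass H^1_{(x_2)}(R_{x_1}/R)$, and for a prime $\fp$ with $R_\fp$ being $(D3)$ and $\dim R_\fp \geq 3$, one shows $\depth_{R_\fp}(R_{x_1}/R)_\fp \geq 2$ using that $R_{x_1}/R$ has a filtration whose quotients are (up to the reflexive hull construction) related to reflexive modules of depth $\geq 3$, hence $H^1_{(x_2)}$ localized at such $\fp$ vanishes, i.e. $\fp \notin \Ass H^2_{\fa}(R)$. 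The remaining primes lie in $N$ or have $\dim R_\fp \leq 2$, and the latter containing $\fa$ are finite in number; that finishes the proof. I would expect the write-up to invoke the depth/reflexive-ideal bookkeeping of \cite{PS14} to make the depth estimate clean.
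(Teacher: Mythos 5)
Your opening reduction---choose an $I$-filter regular pair $x_1,x_2$, apply Nagel--Schenzel to get $\Ass_R H^2_I(R)\subseteq \Ass_R H^2_{\fa}(R)\subseteq \bigcup_n \Ass_R R/(x_1^n,x_2^n)$---is exactly how the paper begins its treatment of the height $\ge 3$ primes, but your proposal has two genuine gaps. First, you restrict to $\Ht I\ge 2$ with no justification; the theorem is for arbitrary $I$, and $\Ht I=1$ is precisely the dangerous case (Katzman's and Singh--Swanson's infinite examples are of this shape). When $\Ht I\ge 2$ and $R$ is normal, $x_1,x_2$ is an honest regular sequence ($R/(x_1)$ has no embedded primes since $R$ is $(S_2)$, and no height-one prime contains $I$), so $\depth R_\fp/(x_1^n,x_2^n)_\fp=\depth R_\fp-2$ and an associated prime of height $\ge 3$ already forces $\fp\in N$; your worry that $(x_1^n)$ ``need not be reflexive'' is a red herring (it is principal in a domain, hence free), and the case you actually treat needs none of the machinery you invoke. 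When $\Ht I=1$, however, $x_2$ may be a zerodivisor modulo $x_1^n$, the naive depth count breaks down, and your claim that the height $\le 2$ primes occurring are minimal over $\fa$ (hence finite) also fails, since $\Ht\fa$ can be $1$; the paper handles the height-$2$ primes by a separate input, namely \cite[Theorem 3.6]{BQ16}, or the Hartshorne--Lichtenbaum argument from the proof of \cite[Corollary 2.8]{HKM09} when $R$ is excellent, for which your sketch offers no substitute.

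Second, the mechanism by which $(D3)$ excludes height $\ge 3$ associated primes---the very step you flag as the main obstacle---is left as a speculative plan (reflexive hulls, a filtration of $R_{x_1}/R$) and never carried out. The paper's actual device is short and different from what you sketch: if $\fp\in\Ass_R R/(x^n,y^n)$ has $\Ht\fp\ge 3$ and $R_\fp$ is $(D3)$, use the exact sequence $0\to R/(x^n:y^n)\to R/(x^n)\to R/(x^n,y^n)\to 0$, where the first map is multiplication by $y^n$; localizing and counting depth shows the colon ideal $(x^n:y^n)$ is $(S_2)$, hence reflexive, so $(D3)$ gives $\depth R_\fp/(x^n:y^n)_\fp\ge 2$, and the depth lemma applied to the sequence yields $\depth R_\fp/(x^n,y^n)_\fp\ge 1$, contradicting $\fp\in\Ass_R R/(x^n,y^n)$. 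It is this identification of a concrete reflexive ideal (the colon, which precisely absorbs the failure of $y^n$ to be regular modulo $x^n$) that makes the $(D3)$ hypothesis bite; without it, or an equivalent replacement, the proposal does not prove the theorem.
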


\begin{proof}
We begin by claiming that the set $\{ \fp \in \Ass H^2_I(R), \Ht \fp \leq 2\}$ is finite. Obviously, for $\fp$ in such set, $\Ht \fp =2$. Then our claim follows from \cite[Theorem 3.6]{BQ16}. 

If $R$ is excellent  the claim also follows from the  proof of \cite[Corollary 2.8]{HKM09}. Let $\sqrt{I} = J\cap K$ where $\Ht I=1$, $\Ht K\geq 2$. If $K\subseteq \fp$, we are done. If not, then $H^2_{I_{\fp}}(R_{\fp}) =  H^2_{J_{\fp}}(R_{\fp})$. But as $R$ is an excellent normal domain, $R_{\fp}$ is analytically irreducible, so $H^2_{J_{\fp}}(R_{\fp})=0$ by the Hartshorne-Lichtenbaum Vanishing Theorem, a contradiction.

Next we show that the set $\{ \fp \in \Ass H^2_I(R), \Ht \fp \geq  3\}$ is a subset of $N$, finishing the proof. By the Nagel-Schelzen trick as in the proof of the main Theorem (or \cite[Proposition 2.7]{HKM09}), we can assume that $\fp \in \Ass R/(x^n,y^n)$ for some $n\geq 0$. We need to show $R_{\fp}$ is not $(D3)$. Suppose it is. There is an exact sequence:
$$0 \to R/(x^n:y^n) \to R/(x^n) \to R/(x^n,y^n) \to 0$$
By localizing and counting depth, it is clear that $(x^n:y^n)$ satisfies Serre's condition $(S_2)$, so it is a reflexive ideal. As $\dim R_{\fp}\geq 3$ and it is $(D3)$, it follows that $\depth R_{\fp}/(x^n:y^n)_{\fp}\geq 2$.   Counting depth again shows that $\depth R_{\fp}/(x^n,y^n)_{\fp}>0$, contradicting the fact that $\fp \in \Ass R/(x^n,y^n)$. 
\end{proof}

\begin{corollary}
Let $R$ be a commutative domain which is regular in codimension $3$ and is locally a complete intersection. Then $\Ass H^2_I(R)$ is finite for any ideal $I$ in $R$.
\end{corollary}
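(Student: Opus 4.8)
The plan is to deduce this from Theorem~\ref{H2} by verifying that its hypothesis holds for any domain $R$ that is regular in codimension $3$ and locally a complete intersection. Concretely, I would show that the set $N = \{\fp \in \Spec R \mid R_\fp \text{ is not } (D3)\}$ is contained in the non-regular locus of codimension $\geq 4$, and in particular is finite.

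First I would note that such an $R$ is automatically a normal domain: being locally a complete intersection it satisfies Serre's condition $(S_2)$ everywhere, and being regular in codimension $3$ it certainly satisfies $(R_1)$, so Serre's criterion gives normality. Thus Theorem~\ref{H2} applies once we control $N$. Next, fix $\fp \in \Spec R$; the local ring $R_\fp$ is again a normal domain which is locally a complete intersection, and I want to check it is $(D3)$ when $\Ht \fp \leq 3$ (this forces $N$ into codimension $\geq 4$, hence finite by the standard fact that the singular locus of an excellent ring—or just any $J$-2 ring, which a localization of a complete intersection over a field or $\mathbb{Z}$ is—is closed, and regularity in codimension $3$ makes it have codimension $\geq 4$). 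If $\dim R_\fp \leq 2$, then $R_\fp$ is a normal domain of dimension at most $2$ and is $(D3)$ by Remark~\ref{h2ex}. If $\dim R_\fp = 3$, then since $R$ is regular in codimension $3$ we have that $(R_\fp)_\fq$ is regular for every $\fq$ of height $\leq 3$ in $R_\fp$—but every prime of $R_\fp$ has height $\leq 3$, so $R_\fp$ is in fact regular, hence a UFD of depth $3$, hence $(D3)$ by Remark~\ref{h2ex}.

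Therefore $R_\fp$ is $(D3)$ whenever $\Ht \fp \leq 3$, so $N \subseteq \{\fp \mid R_\fp \text{ is not regular}, \Ht \fp \geq 4\}$, which is a closed subset of $\Spec R$ of codimension $\geq 4$—in particular it contains no prime of height $\leq 3$, so combined with the hypothesis "regular in codimension $3$" the non-regular locus has codimension exactly $\geq 4$, but we only need $N$ finite. Actually the cleanest route is: $N$ is contained in $\Sing(R)$, which is closed; if $\Sing(R)$ had a component of dimension $\geq 1$ its generic point would be a non-regular prime of height $\leq 3$ (after passing to that component), contradicting regularity in codimension $3$—so $\Sing(R)$ is a finite set of closed points, and a fortiori $N$ is finite. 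Applying Theorem~\ref{H2} finishes the proof.

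The main obstacle I anticipate is purely bookkeeping about codimension versus dimension and making sure the "regular in codimension $3$" hypothesis genuinely forces the bad locus to be finite rather than merely small—this is transparent when $\dim R$ is small but for general (possibly infinite-dimensional, or at least high-dimensional) $R$ one should be careful to phrase it via heights of primes in localizations as above, and to invoke that a domain locally of complete intersection type has closed singular locus (its excellence or $J$-2 property). No genuinely hard step; this corollary is essentially a translation exercise once Theorem~\ref{H2} and Remark~\ref{h2ex} are in hand.
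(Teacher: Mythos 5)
There is a genuine gap at the final step. Your reduction to primes of height at most $3$ is fine (normality via $(S_2)$ plus $(R_1)$, and $(D3)$ at such primes since $R_\fp$ is regular, hence either a normal domain of dimension $\le 2$ or a UFD of depth $3$). But your treatment of primes of height $\ge 4$ fails: you claim that regularity in codimension $3$ forces $\Sing(R)$ to be a finite set of closed points, arguing that a component of $\Sing(R)$ of dimension $\ge 1$ would have a non-regular generic point of height $\le 3$. This confuses the dimension of a component with the height of its generic point. The hypothesis only bounds the \emph{codimension} of the singular locus from below (it has no points of height $\le 3$); when $\dim R\ge 5$ the singular locus can perfectly well be a positive-dimensional, hence infinite, closed set of codimension $\ge 4$ (e.g.\ a complete intersection of dimension $6$ whose singular locus is $2$-dimensional of codimension $4$). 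So the set $N$ of Theorem \ref{H2} is not shown to be finite by your argument, and the corollary as stated does not follow.

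The missing ingredient is exactly the clause of Remark \ref{h2ex} that you did not use: ``any complete intersection regular in codimension $3$ is $(D3)$.'' This rests on Grothendieck's theorem (settling Samuel's conjecture) that a local complete intersection satisfying $(R_3)$ is factorial. Since for every $\fp$ the local ring $R_\fp$ is again a complete intersection which is regular in codimension $3$, it is a UFD; being Cohen--Macaulay its depth equals its dimension, so if $\dim R_\fp\ge 3$ it is a UFD of depth $\ge 3$, and if $\dim R_\fp\le 2$ it is a normal domain of dimension $\le 2$ --- in either case $(D3)$ by Remark \ref{h2ex}. Hence $N=\emptyset$ (in particular finite, with no need to control $\Sing(R)$ at all), and Theorem \ref{H2} gives the conclusion. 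This is the route the paper intends; your argument covers only the codimension $\le 3$ primes, where regularity makes the statement easy, and cannot reach the singular primes of height $\ge 4$ without the factoriality theorem.
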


\begin{remark}\label{h2cor}
Under mild conditions, the properties that guarantee condition $(D3)$, as explained in Remark \ref{h2ex}, are open properties on $\Spec R$. 
Thus, in such situation Theorem \ref{H2} would yield statements such as ``if $R$ is a local normal domain which is  $\mathbb Q$-factorial and has log terminal singularities in codimension $\dim R-2$, then $\Ass H^2_I(R)$  is always finite".

\end{remark}

\begin{remark}
Singh and Swanson constructed (\cite[Theorem 5.1]{SS04}) a $F$-regular  hypersurface UFD $R$ and an ideal $I$ such that $\Ass H^3_I(R)$ is infinite. This shows that having good singularities alone may not help for higher local cohomology. However, in view of Theorem \ref{H2}, it is plausible that if $R$ is regular in codimension $c$ and has nice singularities, then $\Ass H^t_I(R)$ is finite for $t$ small relative to $c$.  The example in \cite{SS04} is normal but not $(R_2)$.
\end{remark}

\end{document}